\title{Set partitions with no $m$-nesting}
\author{Marni Mishna}
\address{Marni Mishna, Dept. Mathematics, Simon Fraser University,
  Burnaby, BC, Canada}
\author{Lily Yen}
\address{Lily Yen, Dept. Mathematics and Statistics, Capilano University, North
  Vancouver, and Dept. Mathematics, Simon Fraser University, Burnaby, BC, Canada}
\keywords{set partition, nesting, pattern avoidance, generating tree, algebraic kernel method, coefficient extraction, enumeration}
\subjclass[2000]{05A18}
\tikzset{partition/.style={fill,circle,inner sep=1pt}}
\newtheorem{prop}{Proposition}
\theoremstyle{definition}
\newtheorem{dfn}{Definition}
\theoremstyle{remark}
\newtheorem{example}{Example}
\newcommand{\nest}{\operatorname{ne}}
\begin{document}

\begin{abstract} % Abstract should precede \maketitle in amsart
  A partition of $\{1, \dots, n\}$ has an $m$-nesting if there exists
  $i_1<i_2< \dots < i_m <j_m < j_{m-1} < \dots < j_1$, where $i_l$ and
  $j_l$ are in the same block for all $1\le l \le m$. We use
  generating trees to construct the class of partitions with no
  $m$-nesting and determine functional equations satisfied by the
  associated generating functions.

  We use algebraic kernel method together with a linear operator to
  describe a coefficient extraction process. This gives rise to
  enumerative data, and illustrates the increasing complexity of the
  coefficient formulas as $m$ increases.
\end{abstract}

\maketitle

\section{Introduction}
In this work we address the enumeration of set partitions that avoid a
particular class of patterns. The patterns considered here, known as
$m$-nestings, arise in a graphical representation of set
partitions. Our goal is to determine useful enumerative information
about partitions that contain no $m$-nesting. This work is in the context
of recent studies of other combinatorial objects that avoid similar or
related patterns, in particular in the study of protein
folding~\cite{chr}.  Our strategy parallels a recent generating tree
approach used by Bousquet-M\'elou to enumerate a class of pattern
avoiding permutations~\cite{b-m}. Here, in a less condensed form, upon exhibiting a general
functional equation for all $m$, we apply both an appropriate
transformation of variables and a multiplicative factor to produce
symmetry in the kernel of the functional equation. This permits us  to apply
the algebraic kernel method and generate a telescoping sum, which
greatly simplifies the expression. The enumerative formulas are obtained by coefficient extraction.
 The result of the analysis produces a first
look into how such numbers are composed in a recurrence. It is our hope that readers 
new to the generating tree approach for obtaining  multivariate functional equations could understand the enumerative power and closed form limitations of this method.

\subsection{Notation and definitions}
A set partition $\pi$ of $[n] \mathrel{\mathop:}= \{ 1, 2, 3, \dots,
n\}$, denoted by $\pi \in \Pi_n$, is a collection of nonempty and
mutually disjoint subsets of $[n]$, called \emph{blocks}, whose union
is $[n]$. The number of set partitions of~$[n]$ into~$k$ blocks is
denoted~$S(n,k)$, and is known as Stirling number of the second
kind. The total number of partitions of $[n]$ is the \emph{Bell}
number $B_n = \sum_{k} S(n,k)$. We represent $\pi$ by a graph on the
vertex set $[n]$ whose edge set consists of arcs connecting elements
of each block in numerical order. Such an edge set is called the
\emph{standard representation} of the partition
$\pi$, as seen in~\cite{cddsy}.
 For example, the standard representation of
$$1|2\,5\,6\,8|3\,7|4$$ is given by the following graph with edge set~$\{
(2,5), (5,6), (6,8), (3,7)\}$:
\begin{center}
\small
\begin{tikzpicture}
  \foreach \i in {1,...,8}
     \node[partition,label=below:$\i$] at (\i,0) {};
  \draw (2,0) to [bend left=45] (5,0);
  \draw (5,0) to [bend left=45] (6,0);
  \draw (6,0) to [bend left=45] (8,0);
  \draw (3,0) to [bend left=45] (7,0);	
\end{tikzpicture}
\end{center}

With this representation, we can define two classes of patterns:
crossings and nestings. An $m$-\emph{crossing} of~$\pi$ is a collection
of~$m$ edges~$(i_1, j_1)$, $(i_2, j_2)$, \dots, $(i_m, j_m)$ such that
$i_1<i_2< \dots < i_m <j_1 < j_2 < \dots < j_m$. Using the standard
representation, an $m$-crossing is drawn as follows:
\begin{center}
\small
\begin{tikzpicture}[bend left=35]
  \node[partition,label=below:$i_1$] at (1,0) {};
  \node[partition,label=below:$i_2$] at (2,0) {};
  \node[label=below:$\dots$] at (3,0) {};
  \node[partition,label=below:$i_m$] at (4,0) {};
  \node[partition,label=below:$j_1$] at (5,0) {};
  \node[partition,label=below:$j_2$] at (6,0) {};
  \node[label=below:$\dots$] at (7,0) {};
  \node[partition,label=below:$j_m$] at (8,0) {};
  \draw (1,0) to (5,0);
  \draw (2,0) to (6,0);
  \draw (3,0) to (7,0);
  \draw (4,0) to (8,0);	
\end{tikzpicture}
\end{center}

Similarly, we define an $m$-\emph{nesting} of $\pi$ to be a collection
of $m$ edges $(i_1, j_1)$, $(i_2, j_2)$, \dots, $(i_m, j_m)$ such that
$i_1<i_2< \dots < i_m <j_m < j_{m-1} < \dots < j_1$. This is drawn:
\begin{center}
\small
\begin{tikzpicture}[bend left=30]
  \node[partition,label=below:$i_1$] at (1,0) {};
  \node[partition,label=below:$i_2$] at (2,0) {};
  \node[label=below:$\dots$] at (3,0) {};
  \node[partition,label=below:$i_k$] at (4,0) {};
  \node[partition,label=below:$j_k$] at (5,0) {};
  \node[label=below:$\dots$] at (6,0) {};
  \node[partition,label=below:$j_2$] at (7,0) {};
  \node[partition,label=below:$j_1$] at (8,0) {};
  \draw (1,0) to (8,0);
  \draw (2,0) to (7,0);
  \draw (3,0) to (6,0);
  \draw (4,0) to (5,0);	
\end{tikzpicture}
\end{center}

A partition is $m$-noncrossing if it contains no $m$-crossing, and it
is said to be $m$-nonnesting if it contains no $m$-nesting.

%%Need to say something about enhanced m-crossings (nestings)
%%if we finish treating enhanced nestings also.

\subsection{Context and plan}
Chen, Deng, Du, Stanley and Yan in~\cite{cddsy} and Krattenthaler in~\cite{Kratt06} gave a non-trivial
bijective proof that $m$-noncrossing partitions of $[n]$ are
equinumerous with $m$-nonnesting partitions of $[n]$, for all values
of $m$ and $n$. A straightforward bijection with Dyck paths
illustrates that $2$-noncrossing partitions (also called
noncrossing partitions) are counted by Catalan
numbers. Bousquet-M\'elou and Xin in \cite{b-mx} showed that the
sequence counting $3$-noncrossing partitions is P-recursive, that is,
satisfies a linear recurrence relation with polynomial
coefficients. Indeed, they determined an explicit recursion, complete
with solution and asymptotic analysis. They
further conjectured that $m$-noncrossing partitions are not
P-recursive for all $m\ge 4$. Bell numbers are well known not to be
P-recursive because of the composed exponentials in the generating
function $B(x) = e^{e^x -1}$
as explained in Example $19$ of ~\cite{bb-mdfgg-b}.

Since $m$-noncrossing partitions of~$[n]$ and $m$-nonnesting
partitions of $[n]$ are equinumerous, we study $m$-nonnesting partitions in this
paper and show how to generate the class using generating trees, and
how to determine a recursion satisfied by the counting sequence for
$m$-nonnesting partitions.

Our approach is heavily inspired by Bousquet-M\'elou's recent work on the
enumeration of permutations with no long monotone subsequence
in~\cite{b-m}. She combined the ideas of recursive construction for
permutations via generating trees and the algebraic kernel method to
determine and solve functional equations with multiple catalytic variables.

In Section~\ref{sec:gt}, we employ Bousquet-M\'elou's generating
tree construction to find functional equations satisfied by the
generating functions for set partitions with no $m$-nesting. The
resulting equations, though similar to the equations arising
in~\cite{b-m}, need a similar multiplicative factor but a different
transformation of variables before a comparable analysis using
algebraic kernel method techniques is applied. To succeed in obtaining
information after applying the algebraic kernel method, a coefficient
extraction procedure is required. 
 This is completed in Section~\ref{sec:eqn-v}. Unfortunately for us, unlike her
work, we can only find explicit equations parameterised by $m$, with $m$
catalytic variables for all $m \ge 1$ without obtaining a closed form expression for the number of set partitions of size $n$ avoiding an $m$-nesting for $m \ge 3$.  In the case of $m=3$, however,  it is
similar to the functional equation given by Bousquet-M\'elou and Xin.

We are able to provide new enumerative data for $m>4$, and also offer
evidence supporting the non-P-recursive conjecture of Bousquet-M\'elou
and Xin in Section~\ref{sec:complexity}.

%--------------------------------------------------
\section{Generating Trees and Functional Equations}
\label{sec:gt}
%--------------------------------------------------
The generating tree construction for the class of $m$-nonnesting
partitions is based on a standard generating tree description of
partitions, and the constraint is incorporated using a vector
labelling system. The generating tree construction has an immediate
translation to a functional equation with $m$-variate series.

%--------------------------------------------------
\subsection{A generating tree for set partitions}
\label{sec:gt-sp}
%--------------------------------------------------

Let $\pi$ be a set partition. Define $\nest(\pi)$ to be the maximal $i$ such that $\pi$ has an $i$-nesting, also called the \emph{maximal nesting number}
 of $\pi$, and let $\Pi^{(m)}_n$ be the set of partitions
of $[n]$ for $n \ge 0$ (where $n=0$ means the empty partition) with
$\nest(\pi) \le m$, thus $(m+1)$-nonnesting.

Note that an arc over a fixed point is not a $2$-nesting, but a $1$-nesting.
\begin{center}
\small
\begin{tikzpicture}[bend left=30]
  \node[partition, label=below:$i$] at (1,0) {};
  \node[partition, label=below:$k$] at (3,0) {};
  \node[partition,label=below:$j$] at (2,0) {};
  \draw (1,0) to (3,0);
\end{tikzpicture}
\end{center}

We next describe how to generate all set partitions via generating
trees in the fashion of~\cite{bb-mdfgg-b}. First, order the blocks of
a given partition, $\pi$, by the maximal element of each block in
descending order.
\begin{example}
  The first block of $1|2\,5\,6\,8|3\,7|4$ is $2\,5\,6\,8$; the second
  block is $3\,7$; the third block is singleton $4$; and $1$ is the
  last block. Using the standard representation,
\begin{center}
\small
\begin{tikzpicture}[pin edge={<-,thin,black}]
  \foreach \i in {1,...,8}
     \node[partition,label=below:$\i$] at (\i,0) {};
  \draw (2,0) to [bend left=45] (5,0);
  \draw (5,0) to [bend left=45] (6,0);
  \draw (6,0) to [bend left=45] (8,0);
  \draw (3,0) to [bend left=45] (7,0);
  \node[pin=-90:\emph{bl. 4}] at (1,-.5) {};
  \node[pin=-90:\emph{bl. 3}] at (4,-.5) {};
  \node[pin=-90:\emph{bl. 2}] at (7,-.5) {};
  \node[pin=-90:\emph{bl. 1}] at (8,-.5) {};
\end{tikzpicture}
\end{center}
we number the blocks in descending order (from the right to the left)
according to the maximal element in each block (that is, the
rightmost vertex of each block).
\end{example}

With the order of blocks thus defined, we warm up  by generating all set partitions
without nesting restriction first. Figure~\ref{fig:gtrees-children}
contains the generating tree for all set partitions, in addition to the
generating tree for the number of children of each node from the tree
of set partitions to indicate how enumeration can be facilitated.

\begin{enumerate}
\item Begin with $\emptyset$ as the top node of the tree. It has only one child, so the corresponding node in the tree for the number of children is labelled $1$.

\item To produce the $n+1$st level of nodes, take each set partition at the $n$th level,  and either add $n+1$ as a singleton, or join $n+1$ to block $j$ for each $ 1\le j \le k$ if the set partition has $k$ blocks.
\end{enumerate}

Summarizing the description above in the notation of~\cite{bb-mdfgg-b}, we recall that  the rewriting rule of a generating tree is denoted by:
\[
[(s_0), \{(k) \rightarrow (e_{1,k}) (e_{2,k}) \dots (e_{k,k})\} ],
\]
where $s_0$ denotes the degree of the root, and for any node labelled $k$, that is, with $k$ descendents,  the label of each  descendent is given by $(e_{j,k})$ for $1 \le j \le k$. Thus, the class of set
partitions has a generating tree of labels given by $[(1): (k)\rightarrow(k+1)(k)^{k-1}].$

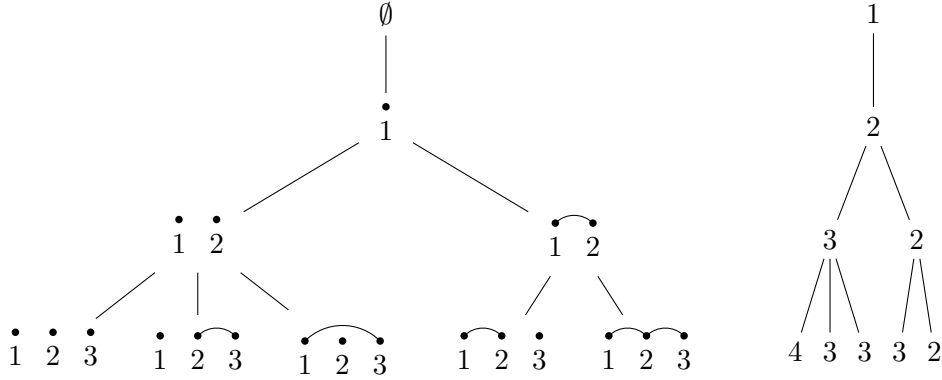
\begin{figure}[ht]  % BEGIN FIGURE %%%%%%%%%%%%%%%%%%%%%%%%%%%%%%%%%%%%%%%%%%
\small\centering
\newsavebox{\Pone}%
\newsavebox{\Ptwoone}%
\newsavebox{\Ptwotwo}%
\newsavebox{\Pthreeone}%
\newsavebox{\Pthreetwo}%
\newsavebox{\Pthreethree}%
\newsavebox{\Pthreefour}%
\newsavebox{\Pthreefive}%
\sbox{\Pone}{%
\begin{tikzpicture}[scale=0.5]
  \foreach \i in {1}
     \node[partition,label=below:$\i$] at (\i,0) {};
\end{tikzpicture}}%
\sbox{\Ptwoone}{%
\begin{tikzpicture}[scale=0.5]
  \foreach \i in {1,2}
     \node[partition,label=below:$\i$] at (\i,0) {};
\end{tikzpicture}}%
\sbox{\Ptwotwo}{%
\begin{tikzpicture}[scale=0.5]
  \foreach \i in {1,2}
     \node[partition,label=below:$\i$] at (\i,0) {};
  \draw (1,0) to [bend left=45] (2,0);
\end{tikzpicture}}%
\sbox{\Pthreeone}{%
\begin{tikzpicture}[scale=0.5]
  \foreach \i in {1,2,3}
     \node[partition,label=below:$\i$] at (\i,0) {};
\end{tikzpicture}}%
\sbox{\Pthreetwo}{%
\begin{tikzpicture}[scale=0.5]
  \foreach \i in {1,2,3}
     \node[partition,label=below:$\i$] at (\i,0) {};
  \draw (2,0) to [bend left=45] (3,0);
\end{tikzpicture}}%
\sbox{\Pthreethree}{%
\begin{tikzpicture}[scale=0.5]
  \foreach \i in {1,2,3}
     \node[partition,label=below:$\i$] at (\i,0) {};
  \draw (1,0) to [bend left=45] (3,0);
\end{tikzpicture}}%
\sbox{\Pthreefour}{%
\begin{tikzpicture}[scale=0.5]
  \foreach \i in {1,2,3}
     \node[partition,label=below:$\i$] at (\i,0) {};
  \draw (1,0) to [bend left=45] (2,0);
\end{tikzpicture}}%
\sbox{\Pthreefive}{%
\begin{tikzpicture}[scale=0.5]
  \foreach \i in {1,2,3}
     \node[partition,label=below:$\i$] at (\i,0) {};
  \draw (1,0) to [bend left=45]
        (2,0) to [bend left=45] (3,0);
\end{tikzpicture}}%
\begin{tikzpicture}[
     level 2/.style={sibling distance=13em},
     level 3/.style={sibling distance=5em},
     baseline=(current bounding box.base)
  ]
  \node {$\emptyset$}
     child { node {\usebox{\Pone}}
        child { node {\usebox{\Ptwoone}}
           child { node {\usebox{\Pthreeone}} }
           child { node {\usebox{\Pthreetwo}} }
           child { node {\usebox{\Pthreethree}} }
        }
        child { node {\usebox{\Ptwotwo}}
           child { node {\usebox{\Pthreefour}} }
           child { node {\usebox{\Pthreefive}} }
        }
     };
\end{tikzpicture}
\qquad
\begin{tikzpicture}[
     level 2/.style={sibling distance=3em},
     level 3/.style={sibling distance=1.2em},
     baseline=(current bounding box.base)
  ]
  \node {1}
     child { node {2}
        child { node {3}
           child { node {4} }
           child { node {3} }
           child { node {3} }
        }
        child { node {2}
           child { node {3} }
           child { node {2} }
        }
     };
\end{tikzpicture}
\caption{\emph{Generating tree for set partitions and its corresponding generating tree of the number of children}}
\label{fig:gtrees-children}
\end{figure}   % END FIGURE %%%%%%%%%%%%%%%%%%%%%%%%%%%%%%%%%%

\subsection{A vector label to track nestings}
Note that in Figure~\ref{fig:gtrees-children}, the generating tree of
set partitions generates all set partitions graded by $n$, the size of
$\pi$ but it does not keep track of nesting numbers. Also note that in
the generating tree for the number of children, the number of children
of $\pi$ is one more than the number of blocks of $\pi$ for any
partition $\pi$.

Fix $m$. In order to keep track of nesting numbers, we need to define the
\emph{label} of $\pi \in \Pi^{(m)}$.

\begin{dfn}
Define the label of a partition, $L(\pi) = ( a_1(\pi), a_2(\pi), \dots, a_m(\pi))$, or in short, $L(\pi) = ( a_1, a_2, \dots, a_m)$ as follows. For $1 \le j \le m$,
\[
  a_j(\pi) =
  \begin{cases}
     \text{\parbox{3.2in}{$1+$ number of blocks in $\pi$,
                        }}&  \text{if $\pi$ is $j$-nonnesting,}\\
&\\
     \text{\parbox{3.2in}{$1+$ number of blocks ending to the right
                        of the smallest vertex in the
                        rightmost~$j$-nesting}}
         & \text{otherwise.}
  \end{cases}
\]

\end{dfn}
By the \emph{rightmost}~$j$-nesting, we mean the minimal element in the $j$-nesting of a particular partition $\pi$ that is greater than or equal to all minimal elements in all $j$-nestings of $\pi$.

\begin{example}
To continue the example, let $\pi=1|2\,5\,6\,8|3\,7|4$ and suppose
$m=3$. Then $L(1|2\,5\,6\,8|3\,7|4) = (3,4,5)$ for the following
reasons. The rightmost 1-nesting is the edge with largest vertex
endpoint, $(6,8)$. Hence, $a_1(\pi)=3$ because blocks 1 and 2 end
to the right of vertex 6. The rightmost 2-nesting is the set of edges
$\{(5,6), (3,7)\}$ hence $a_2(\pi)=4$ because 3 blocks end to the
right of vertex 3. Finally,
$a_3(\pi)=5$ because the diagram has no $3$-nesting, and is comprised of
$4$ blocks.
Note that in this convention, the empty set partition has label
$(1,1,\dots, 1)$, since it has no nestings and no blocks.
\end{example}

A set partition in~$\Pi^{(m)}$ will have $a_m$ children. This is one plus the
number of blocks, if there is no $m$-nestings (and hence no risk that
adding an edge will create an $m+1$-nesting). Otherwise, it indicates one plus
the number of blocks to which you can add an edge without creating an
$m+1$-nesting. The label of a set partition is sufficient to derive
the label of each of its children, and this is described in the next
proposition. Also, remark that the label is a non-decreasing sequence, since
the rightmost $j$-nesting either contains the rightmost $j-1$ nestings
or is to the left of it.

\begin{prop}[Labels of children]
\label{prop:clabel}
Let $\pi$ be in $ \Pi^{(m)}_n$, the set of set partitions on $[n]$
avoiding $m+1$-nestings, and suppose the label of $\pi$ is $L(\pi) = (
a_1, a_2, \dots, a_m)$. Then, the labels of the $a_m$ set partitions
of $\Pi^{(m)}_{n+1}$ obtained by recursive construction via the
generating tree are
\[
(a_1+1, a_2+1, \dots, a_m+1) \qquad\mbox{(Add $n+1$ as a singleton to $\pi$)}
\]

and
\[
\begin{array}{l@{\quad(}*{3}{r@{,\,}}@{\dots,\,}r@{)\qquad}l}
     &  2    & a_2  & a_3   & a_{m-1}, a_m&\mbox{(Add $n+1$ to
    block 1)}\\
      &  3    & a_2  & a_3    & a_{m-1}, a_m&\mbox{(Add $n+1$ to
    block 2)}\\
  \multicolumn{5}{r}{}&\multicolumn{1}{c}{\vdots}\\
 &  a_1& a_2& a_3    & a_{m-1}, a_m&\mbox{(Add $n+1$ to
    block $a_1-1$)}\\
   &  a_1+1& a_1+1& a_3  & a_{m-1}, a_m&\mbox{(Add $n+1$ to
    block $a_1$)}\\
 &  a_1+1& a_1+2& a_3 & a_{m-1}, a_m& \mbox{(Add $n+1$ to
    block $a_1+1$)}\\
  \multicolumn{5}{r}{}&\multicolumn{1}{c}{\vdots}\\
   &  a_1+1& a_2+1& a_2+1  & a_{m-1}, a_m& \mbox{(Add $n+1$ to
    block $a_2$)}\\
 \multicolumn{5}{r}{}&\multicolumn{1}{c}{\vdots}\\
  &  a_1+1& a_2+1& a_3+1& a_{m-1}+1, a_{m-1}+1&\mbox{(Add $n+1$ to
    block $a_{m-1}$)}\\
\multicolumn{5}{r}{}&\multicolumn{1}{c}{\vdots}\\
&  a_1+1& a_2+1& a_3+1& a_{m-1}+1, a_m&\mbox{(Add $n+1$ to
    block $a_{m} - 1$)}\\
\end{array}
\]
\end{prop}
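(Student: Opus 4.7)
The plan is to split into the two kinds of children produced by the generating-tree rule. For the \emph{singleton child} (append $\{n+1\}$ as a new block), no edge is created, so every nesting of $\pi'$ is a nesting of $\pi$, and the new block sits at the head of the max-descending order. Counting, the number of blocks lying to the right of any fixed vertex grows by exactly one, so $a_i(\pi')=a_i(\pi)+1$ for every $i$.

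For an \emph{append-to-block-$j$} child, write $x=\max(\text{block } j\text{ of }\pi)$ and $e=(x,n+1)$ for the new edge. First I record the block reindexing in $\pi'$: block $1$ of $\pi'$ is the old block $j$ (now of maximum $n+1$), blocks $2,\dots,j$ of $\pi'$ are the old blocks $1,\dots,j-1$, and blocks $r$ for $r>j$ are unchanged. The structural heart of the proof is a decomposition: every $k$-nesting of $\pi'$ either (a) is a $k$-nesting of $\pi$, or (b) has $e$ as its outermost edge together with a $(k-1)$-nesting of $\pi$ all of whose vertices lie in $(x,n+1)$. For $k=1$ option (b) is just $e$ itself; for $k\ge 2$ it is equivalent to $p_{k-1}(\pi)>x$, because the rightmost $(k-1)$-nesting of $\pi$ is by definition the one whose leftmost vertex is largest.

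The key numerical translation is that every $p_i(\pi)$ is an opener of an edge, hence never a block maximum; consequently the strict equivalences $x>p_i(\pi)\iff j<a_i(\pi)$ and $x<p_i(\pi)\iff j\ge a_i(\pi)$ hold. Set $a_0=0$ and let $k$ be the unique index with $a_{k-1}(\pi)\le j\le a_k(\pi)-1$. For each row $i$ I determine $p_i(\pi')$ and recount the blocks of $\pi'$ whose max exceeds it. When $i<k$, one has $j\ge a_{k-1}\ge a_i$, so $x<p_i(\pi)$, any type (b) contribution lies strictly left of $p_i(\pi)$, and $p_i(\pi')=p_i(\pi)$; the new block (with max $n+1$) joins the top set while old block $j$ was not in it, yielding $a_i'=a_i+1$. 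When $i=k$, type (b) exists and its leftmost $x$ exceeds $p_k(\pi)$, so $p_k(\pi')=x$; the new block together with the $j-1$ old blocks whose max exceeds $x$ gives $a_k'=j+1$. When $i>k$, one has $j\le a_k-1\le a_{i-1}-1$, so type (b) does not exist, $p_i(\pi')=p_i(\pi)$ whenever it is defined, and an analogous recount gives $a_i'=a_i$.

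The main obstacle is the bookkeeping when some $a_i(\pi)$ equals one plus the number of blocks of $\pi$, that is, when $\pi$ has no $i$-nesting and $p_i(\pi)$ is undefined: one has to verify that the degenerate formulas still collapse to the stated entries. Secondary sanity checks are the non-decreasing property of $L(\pi')$, which is immediate from $a_{k-1}\le j\le a_k-1$, and the fact that $\pi'$ remains $(m+1)$-nonnesting precisely when $j\le a_m(\pi)-1$, matching the stated range of children.
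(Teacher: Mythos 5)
The paper's entire proof of this proposition is the phrase ``By careful inspection,'' so your proposal does not so much take a different route as supply an argument where the paper writes none. Your argument is correct, and it is presumably the inspection the authors intend, organized around the two right observations: (i) every nesting of the child either avoids the new arc $e=(x,n+1)$, hence is a nesting of $\pi$, or has $e$ as its outermost arc over a nesting of $\pi$ supported strictly to the right of $x$; and (ii) since the smallest vertex $p_i(\pi)$ of the rightmost $i$-nesting is the opener of an arc and hence never a block maximum, the comparison of $x$ with $p_i(\pi)$ translates exactly into $j<a_i$ versus $j\ge a_i$ under the descending-maximum block order. The one loose end you explicitly defer --- the rows where $\pi$ has no $i$-nesting and $p_i$ is undefined --- does close without incident: for $i<k$ the inequality $j\ge a_i$ is incompatible with $a_i=1+(\text{number of blocks})$ because $j$ never exceeds the number of blocks, so $p_i$ is forced to be defined there, and the same reasoning shows $p_{k-1}$ is defined whenever $j\ge a_{k-1}$ with $k\ge 2$; for $i=k$ the new rightmost $i$-nesting has smallest vertex $x$ whether or not $\pi$ had a $k$-nesting, giving $a_k'=j+1$ either way; and for $i>k$ no type (b) $i$-nesting is created, so an $i$-nonnesting $\pi$ yields an $i$-nonnesting child with the same number of blocks and $a_i'=a_i$. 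The only presentational requests I would make are to define $p_i(\pi)$ before using it and to write out the degenerate verification rather than merely flagging it, since it is the one place a reader could stumble.
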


\begin{proof}
By careful inspection.
\end{proof}

\begin{example}
Consider the following partition from $\Pi_{8}^{(3)}$. The reader can refer to its arc diagram in Example 1 which shows that is is $3$-nonnesting, thus also $4$-nonnesting.
The partition $1|2\,5\,6\,8|3\,7|4$ with label
$(3,4,5)$ has five children and their respective labels are:
\[\begin{array}{cc}
\pi & L(\pi)\\[1mm]
1|2\,5\,6\,8|3\,7|4|9 & (4, 5, 6)\\
1|2\,5\,6\,8\,9|3\,7|4& (2, 4, 5)\\
1|2\,5\,6\,8|3\,7\,9|4& (3, 4, 5)\\
1|2\,5\,6\,8|3\,7|4\,9& (4, 4, 5)\\
1\,9|2\,5\,6\,8|3\,7|4& (4, 5, 5)\\
\end{array}
\]
\end{example}

Notice that in Proposition~\ref{prop:clabel}, the first label comes
from adding $n+1$ as a singleton to get an element of
$\Pi^{(m)}_{n+1}$. The other $a_m - 1$ labels result from adjoining
the element $n+1$ to the maximal element of block $l$ for every $1 \le
l \le a_m -1$ blocks without creating an $m+1$-nesting.
\begin{example}As we mentioned before, $2$-nonnesting set partitions are counted
by Catalan numbers. The generating tree construction given in
Proposition~\ref{prop:clabel} restricted to this case is given by \[[(1):
(k)\rightarrow (2)(3)\dots(k+1)],\] which is the same construction for
Catalan numbers given in~\cite{bb-mdfgg-b}.
The generating tree for 3-noncrossing partitions is given by
\[
[(1,1):(i,j)\rightarrow(i+1, j+1)(2,j)(3,j)\cdots(i,j)(i+1,
i+1)(i+1, i+2)\dots(i+1, j)].
\]
\end{example}
% Maple code for generating the above generating tree.
% rule:=proc(label) option remember;
%  local h,r,s,out;
%  h:=label[1]; r:=label[2];;
%  out:=[[h+1,r+1],seq([i, r],i=2..h),seq([h+1, j],j=h+1..r) ];
%  return out;
% end:
% nextlevel:=proc(l) option remember;
%  local i,out;
%  out:=[];
%  for i from 1 to nops(l) do
%   out:=[op(out),op(rule(l[i]))];
%  od;
%  return out;
% end:
% level:=proc(n) option remember;
%   if n=0 then
%      return [[1,1]]
%   else
%      return nextlevel(level(n-1));
%   fi;
% end:
% seq(nops(level(n)),n=0..7);
%                         1, 1, 2, 5, 15, 52, 202, 859

%----------------------------------------------------------------------
\subsection{A functional equation for the generating function}
\label{sec:u-eqs}
%----------------------------------------------------------------------
The simple structure of the labels of a partition's children in
Proposition \ref{prop:clabel} permits a straightforward translation of
the combinatorial construction into a functional equation. Let us
define $\tilde{F}(u_1, u_2, \dots, u_m; t)$ to be the ordinary
generating function of partitions in $\Pi^{(m)}$ counted by the
statistics $a_1$, $a_2$, \dots, $a_m$ and by size,
\[
\begin{split}
\tilde{F}(u_1, u_2, \dots, u_m; t)  &:= \sum_{\pi \in \Pi^{(m)}} u_1^{a_1(\pi)}u_2^{a_2(\pi)} \dots u_m^{a_m(\pi)}t^{|\pi|}\\
				    &= \sum_{a_1, a_2, \dots, a_m} \tilde{F}_{\mathbf{a}}(t) u_1^{a_1}u_2^{a_2} \dots u_m^{a_m},	
\end{split}
\]
where $\tilde{F}_{\mathbf{a}}(t)$ is the size generating
function for the set partitions of $\Pi^{(m)}$ with the label $\mathbf{a}=(a_1, a_2,
\dots, a_m)$.
Thus, when $m=2$,
\[
\tilde{F}(\mathbf{u};t) = u_{{1}}u_{{2}}+{u_{{1}}}^{2}{u_{{2}}}^{2}t+ \left( {u_{{1}}}^{3}{u_{{
2}}}^{3}+{u_{{1}}}^{2}{u_{{2}}}^{2} \right) {t}^{2}+ \left( {u_{{1}}}^
{4}{u_{{2}}}^{4}+2\,{u_{{1}}}^{3}{u_{{2}}}^{3}+{u_{{1}}}^{2}{u_{{2}}}^
{2}+{u_{{1}}}^{2}{u_{{2}}}^{3} \right) {t}^{3}+\dots.
\]
Proposition~\ref{prop:clabel} implies
\begin{align*}
\tilde{F}(u_1, u_2, \dots, u_m; t) &= u_1u_2\dots u_m + tu_1u_2\dots u_m\tilde{F}(u_1, u_2, \dots, u_m; t)
\\
&+ t\sum_{a_1, a_2, \dots, a_m}\hspace{-2mm}\tilde{F}_{\mathbf{a}}(t) u_2^{a_2}u_3^{a_3}\dots u_m^{a_m}\sum_{\alpha = 2}^{a_1} u_1^{\alpha}
\\
&+ t\sum_{a_1, a_2, \dots, a_m}\hspace{-2mm}\tilde{F}_{\mathbf{a}}(t)\sum_{j=2}^m \sum_{\alpha = a_{j-1}+1}^{a_j} u_1^{a_1 + 1}
u_2^{a_2 + 1}\dots u_{j-1}^{a_{j-1} + 1} u_j^\alpha u_{j+1}^{a_{j+1}} \dots u_m^{a_m}.
\end{align*}

We compactify the sums using finite geometric series sum formula and summarize the above derivation into the following functional equation for the generating function.
\begin{prop}

\begin{multline}
\tilde{F}(\mathbf{u}; t)
  = u_1u_2\dots u_m
     + tu_1u_2\dots u_m \tilde{F}(\mathbf{u}; t)
\\
  + tu_1 \left(\frac{ \tilde{F}(\mathbf{u};t) - u_1\tilde{F}(1,u_2, \dots, u_m; t) }
                                                {u_1 - 1} \right)
 \\
  + t\sum_{j=2}^m u_1u_2\dots u_j
       \left( \frac{\tilde{F}(\mathbf{u};t) -\tilde{F}(u_1,\dots,u_{j-2},u_{j-1}u_j,1, u_{j+1},\dots, u_m; t)}
             {u_j-1}\right),
\label{eq:u}
\end{multline}
where $\tilde{F}(\mathbf{u};t) = \tilde{F}(u_1, u_2, \dots, u_m; t)$.
\end{prop}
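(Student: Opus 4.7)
The plan is to derive this equation directly from the multi-sum expansion displayed immediately above it, by collapsing each inner $\alpha$-sum with the finite geometric series identity and then recognizing each remaining sum over $\mathbf{a}$ as a specialization of $\tilde F(\mathbf u;t)$. The seed $u_1u_2\dots u_m$ (contributed by the empty partition, whose label is $(1,\dots,1)$) and the singleton-insertion term $tu_1u_2\dots u_m\,\tilde F(\mathbf u;t)$ carry over verbatim from the expanded form, so only the two remaining double sums require work.

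For the $j=1$ contribution I would evaluate $\sum_{\alpha=2}^{a_1}u_1^\alpha=(u_1^{a_1+1}-u_1^2)/(u_1-1)$ and interchange with the outer sum over $\mathbf a$. The $u_1^{a_1+1}$ piece reassembles as $u_1\tilde F(\mathbf u;t)$, while the $u_1^2$ piece has no $a_1$-dependence left and so reassembles as $u_1^2\tilde F(1,u_2,\dots,u_m;t)$. Factoring $t/(u_1-1)$ then produces exactly the third summand of the statement.

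For each $j\ge 2$ I would apply $\sum_{\alpha=a_{j-1}+1}^{a_j}u_j^\alpha=(u_j^{a_j+1}-u_j^{a_{j-1}+1})/(u_j-1)$. The $u_j^{a_j+1}$ branch combines with the $u_k^{a_k+1}$ prefactors for $k<j$ and the $u_k^{a_k}$ prefactors for $k>j$, after extracting the $+1$ shifts as the monomial $u_1u_2\dots u_j$, to give $u_1u_2\dots u_j\,\tilde F(\mathbf u;t)/(u_j-1)$. The subtractive branch is the delicate point: its exponent of $u_j$ is $a_{j-1}+1$, which must be fused with the $u_{j-1}^{a_{j-1}+1}$ prefactor into the single monomial $(u_{j-1}u_j)^{a_{j-1}+1}$, after which the collapsed $\mathbf a$-sum has no remaining dependence on $a_j$ at all, so the $a_j$-summation behaves as if $u_j=1$. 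The result is precisely $\tilde F(u_1,\dots,u_{j-2},u_{j-1}u_j,1,u_{j+1},\dots,u_m;t)$ multiplied by $u_1u_2\dots u_j/(u_j-1)$.

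Summing the $j=2,\dots,m$ contributions together with the $j=1$ term and the unchanged seed and singleton terms yields the stated equation. The principal obstacle, which is purely bookkeeping, lies in the subtractive branch: one must recognize that the exponent collision between $u_{j-1}$ and $u_j$ forces the two-variable \emph{merge} $u_{j-1}\mapsto u_{j-1}u_j$ together with the specialization $u_j\mapsto 1$, rather than an independent substitution in either slot. Once this is seen, the equation assembles term-by-term with no further trickery.
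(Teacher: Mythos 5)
Your proposal is correct and follows exactly the route the paper takes: the paper derives equation~\eqref{eq:u} from the preceding multi-sum expansion by ``compactifying the sums using the finite geometric series sum formula,'' and your term-by-term evaluation of $\sum_{\alpha=2}^{a_1}u_1^\alpha$ and $\sum_{\alpha=a_{j-1}+1}^{a_j}u_j^\alpha$, including the key observation that the subtractive branch forces the merge $u_{j-1}^{a_{j-1}+1}u_j^{a_{j-1}+1}=(u_{j-1}u_j)^{a_{j-1}+1}$ with $u_j$ effectively set to $1$, is precisely the bookkeeping the authors leave implicit. No gaps.
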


%-----------------------------------------------------------------
\section{Processing the functional equation}
\label{sec:eqn-v}
%-----------------------------------------------------------------
To process the functional equation for $\tilde{F}(\mathbf{u};t)$ we
transform the variables to get a form more amenable to analysis. We
follow~\cite{b-m}, and do this in two steps. The first rewrites in a
$v$ variable to remove the exponent restriction on the $u_i$'s because $a_1(\pi) \le a_2(\pi) \le \dots \le a_m(\pi)$; the second
transformation to a set of $x$
variables in the next section allows us to analyse the coefficients.

\subsection{Removing the exponent restriction}
Define
\[
F(\mathbf{v};t)=F(v_1, \dots, v_m, v_{m+1};t):=
\sum_{\pi \in \Pi^{(m)}} v_1^{a_1}v_2^{a_2-a_1} \dots v_m^{a_m-a_{m-1}} v_{m+1}^{|\pi|-a_m} t^{|\pi|}.
\]
where $(a_1, a_2, \dots, a_m) = L(\pi)$. Thus, we have eliminated the dependency $a_1\le a_2 \le \dots \le a_m$ between the exponents of $u_1$, $u_2$, \dots, $u_m$ in $\tilde{F}(\mathbf{u};t)$.

We can write $\tilde{F}$ in terms of $F$ and vice versa:
\[
F(v_1, \dots, v_{m+1}; t) =\tilde{F}\left(\frac{v_1}{v_2}, \frac{v_2}{v_3}, \dots, \frac{v_m}{v_{m+1}} ; v_{m+1}t\right),
\]
and
\[
\tilde{F}(u_1, \dots, u_m; v_{m+1}t) = F(u_{1,m}v_{m+1}, u_{2,m}v_{m+1}, \dots, u_{m}v_{m+1}, v_{m+1} ; t),
\]
where $u_{j,m} = u_j u_{j+1} \dots u_m$. The function $F$ satisfies a simpler functional equation.

\begin{prop}\label{veq}
The generating function $F(\mathbf{v};t) = F(v_1, v_2, \dots, v_{m}, v_{m+1};t)$ of set partitions of $\Pi^{(m)}$ satisfies
\begin{multline}
F(\mathbf{v};t) = \frac{v_1}{v_{m+1}} + tv_1 \Bigl(F(\mathbf{v};t)+ v_{m+1} \sum_{j=1}^m \frac{F(\mathbf{v};t)}{v_j - v_{j+1}}\Bigr)
\\
 - v_{m+1}tv_1 \Bigl(\frac{v_1}{v_2} \frac{F(v_2, v_2, v_3, \dots, v_{m+1};t)}{v_1 - v_2}
\\
  +  \sum_{j=2}^m \frac{F(v_1, \dots, v_{j-1}, v_{j+1}, v_{j+1}, v_{j+2}, \dots, v_m, v_{m+1};t)}
                                            {v_j - v_{j+1}}\Bigr).
\end{multline}
The series $F(1, 1, \dots, 1;t)$ is the generating function for the
class of  $(m+1)$-nonnesting set partitions.
\end{prop}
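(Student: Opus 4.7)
The plan is to derive the displayed identity by a direct change of variables in the functional equation \eqref{eq:u} for $\tilde{F}$, substituting $u_j = v_j/v_{j+1}$ for $j = 1, \ldots, m$ and replacing $t$ by $v_{m+1}t$ throughout, then reading the result through the dictionary $\tilde{F}(u_1, \ldots, u_m; v_{m+1} t) = F(u_{1,m} v_{m+1}, \ldots, u_m v_{m+1}, v_{m+1}; t)$ stated before the proposition. The second claim, that $F(1, 1, \ldots, 1; t)$ counts $(m{+}1)$-nonnesting set partitions by size, is immediate: evaluating the defining series at $v_1 = \cdots = v_{m+1} = 1$ collapses every monomial $v_1^{a_1} v_2^{a_2-a_1} \cdots v_{m+1}^{|\pi|-a_m}$ to $1$, leaving $\sum_{\pi \in \Pi^{(m)}} t^{|\pi|}$.

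The backbone of the calculation is the telescoping identity $u_j u_{j+1} \cdots u_k = v_j/v_{k+1}$. From it, $u_1 \cdots u_m = v_1/v_{m+1}$, so the constant term of \eqref{eq:u} becomes $v_1/v_{m+1}$ and $(v_{m+1} t)\, u_1 \cdots u_m\, \tilde{F} = t v_1 F$. Writing $u_j - 1 = (v_j - v_{j+1})/v_{j+1}$, each rational prefactor simplifies to
\[
\frac{(v_{m+1} t)\, u_1 u_2 \cdots u_j}{u_j - 1} \;=\; \frac{v_{m+1} t v_1}{v_j - v_{j+1}}.
\]
Collecting the contributions of \eqref{eq:u} whose numerator contains $\tilde{F}(\mathbf{u}; t)$ itself therefore produces $t v_1 v_{m+1} \sum_{j=1}^m F(\mathbf{v}; t)/(v_j - v_{j+1})$, which accounts for the first line of the target equation.

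The main technical step is to rewrite the specialized arguments of $\tilde{F}$ in terms of $F$. Setting $u_1 = 1$ is equivalent to $v_1 = v_2$, so the dictionary gives $\tilde{F}(1, u_2, \ldots, u_m; v_{m+1} t) = F(v_2, v_2, v_3, \ldots, v_{m+1}; t)$. For $j \geq 2$ one must verify slot by slot that the modified tuple $(u_1, \ldots, u_{j-2}, u_{j-1} u_j, 1, u_{j+1}, \ldots, u_m)$ transforms to $F(v_1, \ldots, v_{j-1}, v_{j+1}, v_{j+1}, v_{j+2}, \ldots, v_{m+1}; t)$: the combined factor $u_{j-1} u_j = v_{j-1}/v_{j+1}$ together with the inserted $1$ in the $j$-th position fuse through the telescoping identity to replace $v_j$ by $v_{j+1}$ in exactly that slot, while every other $v_k$ is unchanged. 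The prefactor $v_1/v_2$ in front of $F(v_2, v_2, v_3, \ldots, v_{m+1}; t)$ in the statement arises because the corresponding numerator in \eqref{eq:u} carries an explicit extra $u_1$, which transforms to $v_1/v_2$.

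The only real obstacle is careful bookkeeping: tracking which slot has been shifted in each specialization of $\tilde{F}$ and verifying that the denominators $v_j - v_{j+1}$ assemble correctly once the factors $v_{m+1}/v_{j+1}$ absorbed into $(u_j-1)^{-1}$ have been balanced against the $u_1 \cdots u_j = v_1/v_{j+1}$ numerators. No new combinatorial input is needed beyond \eqref{eq:u} and the change of variables itself.
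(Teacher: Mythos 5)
Your proposal is correct and follows exactly the route the paper intends: the paper offers no explicit proof of Proposition~\ref{veq} beyond stating the change of variables $u_j = v_j/v_{j+1}$, $t \mapsto v_{m+1}t$ and the dictionary $\tilde{F}(u_1,\dots,u_m;v_{m+1}t) = F(u_{1,m}v_{m+1},\dots,u_m v_{m+1},v_{m+1};t)$, and your substitution into Equation~\eqref{eq:u} together with the slot-by-slot verification of the specialized arguments supplies precisely the omitted computation. The telescoping identities you use ($u_1\cdots u_j = v_1/v_{j+1}$, $u_j - 1 = (v_j - v_{j+1})/v_{j+1}$) and the treatment of the extra factor $u_1 = v_1/v_2$ in the $j=1$ term all check out.
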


%----------------------------------------------------------------------
\subsection{A second transformation}
\label{sec:x-eqs}
%----------------------------------------------------------------------
We take the functional equation for $F(\mathbf{v};t)$ in Proposition
\ref{veq} and rearrange the terms to find the kernel of the functional equation as follows
\begin{multline}
\left(1-tv_1 - tv_1v_{m+1} \sum_{j=1}^m \frac1{v_j - v_{j+1}}\right) F(\mathbf{v} ;t) =\\
\frac{v_1}{v_{m+1}} - v_{m+1}tv_1
        \Bigl(\frac{v_1}{v_2} \frac{F(v_2, v_2, v_3, \dots, v_{m+1};t)}{v_1 - v_2}
   \\
   +    \sum_{j=2}^m \frac{F(v_1, \dots, v_{j-1}, v_{j+1}, v_{j+1}, v_{j+2}, \dots, v_m, v_{m+1};t)}
                                                   {v_j - v_{j+1}}
             \Bigr).
\end{multline}
The kernel is
\[
1-tv_1 - tv_1v_{m+1} \sum_{j=1}^m \frac1{v_j - v_{j+1}}.
\]
To exploit invariance properties of the kernel, we introduce the following transformation of the $v_j$'s:
\begin{align*}
v_{m+1} &= 1\\
v_{m}   &= 1 + x_m\\
\vdots\\
v_2     &= 1 + x_m + \dots + x_2\\
v_1     &= 1 + x_m + \dots + x_2 + x_1.
\end{align*}
This transformation enables us to rewrite the kernel as
\[
1 - t\left(x_1+x_2+\dots+x_m+1\right)\left(1+\sum_{j=1}^{m}\frac{1}{x_j}\right).
\]
This new kernel is invariant under~$\mathfrak{S}_m$, the symmetric group on $[m]$.
To simplify presentation of the functional equation, we use
\[
s=x_1+x_2+\dots+x_m+1, \qquad h = \frac1{x_1} + \frac1{x_2}+ \dots + \frac1{x_{m}} +1.
\]
Divide both sides of the functional equation by $s$ we just defined, we get
\begin{multline}
\bigl(\frac1{s} - t h\bigr) \bar{F}(x_1, x_2, \dots, x_m;t) =
\\
1 - t\Bigl( \frac{s}{s-x_1} \frac{\bar{F}(0, x_2, x_3, \dots, x_m;t)}{x_1}
\\
   +   \sum_{j=2}^m \frac{\bar{F}(x_1, \dots, x_{j-2}, x_{j-1} + x_j, 0, x_{j+1}, \dots, x_m; t)}
                                                    {x_j}\Bigr),
\label{eq:kerx}
\end{multline}
where
\[
\bar{F}(x_1, x_2, \dots, x_m;t) = F(v_1, v_2, \dots, v_m, v_{m+1};t),
\]
and
\[
\bar{F}(0, 0, \dots, 0;t) = F(1, 1, \dots, 1; t)
\]
is the evaluation that yields the ordinary generating function in $t$ for set partitions avoiding $m+1$-nestings.

%----------------------------------------------------------------------
\subsection{A multiplicative factor and a telescoping sum}
\label{sec:Mx}
%----------------------------------------------------------------------
We introduce a multiplicative factor, $M(\mathbf{x}) := x_1 x_2^2
x_3^3 \dots x_m^m$, to be applied to Equation~\eqref{eq:kerx}.  Let the
new kernel, $K(\mathbf{x};t)$ be defined by $ \frac1{s} - ht$. Since
the kernel $K(\mathbf{x};t)$ is invariant under $\mathfrak{S}_m$, when
we take the signed orbit sum of the functional Equation \eqref{eq:kerx}
under $\mathfrak{S}_m$, namely, $\sum_{\sigma \in \mathfrak{S}_m} \epsilon(\sigma) \sigma(\text{functional equation})$ the left hand side has the kernel as a factor
outside the sum; namely,
\[
LHS = K(\mathbf{x}; t)\sum_{\sigma \in \mathfrak{S}_m} \epsilon(\sigma) \sigma(M(\mathbf{x}) \bar{F}(x_1, \dots, x_m;t)).
\]

On the right hand side of Equation~\eqref{eq:kerx}, before taking the
orbit sum, the effect of multiplying by $M(\mathbf{x})$ is
\begin{equation}
\begin{split}
M(\mathbf{x}) - t \Bigl(&x_2^2 x_3^3 \dots x_m^m \bar{F}(0, x_2, x_3, \dots, x_m;t) +
\frac{M(\mathbf{x}) \bar{F}(0, x_2, \dots, x_m;t)}{x_2 + x_3+ \dots + x_m}
 \\
&+ x_1 x_2 x_3^3 \dots x_m^m \bar{F}(x_1+x_2, 0, x_3, \dots, x_m;t)
\\
&+ x_1 x_2^2 x_3^2 x_4^4 \dots x_m^m \bar{F}(x_1,x_2 + x_3, 0, x_4, \dots, x_m;t) \label{eq:mx}
\\
&+ \dots
 \\
&+ x_1 x_2^2 x_3^3 \dots x_{m-1}^{m-1} x_m^{m-1} \bar{F}(x_1,\dots, x_{m-2}, x_{m-1}+x_m, 0;t)\Bigr)
\end{split}
\end{equation}
Note that the coefficient of $\bar{F}(0, x_2, x_3, \dots, x_m;t)$ is split because
\[
\frac{s}{s-x_1} = \frac{s - x_1 + x_1}{s - x_1} = 1 + \frac{x_1}{s - x_1}
\]
which is easier to manipulate when the orbit sum is taken.
Because each of the last $m-1$ terms of the RHS of Equation~\eqref{eq:mx} is
invariant under $\sigma_j = (j, j+1)$ for some $j \in [m-1]$, (that is, the
generators for $\mathfrak{S}_m$), by forming the signed sum over
$\mathfrak{S}_m$ we reduce these $m-1$ terms to zero, leaving only the
first three terms:
\begin{multline}
K( \mathbf{x}; t ) \sum_{ \sigma \in \mathfrak{S}_m }
\epsilon( \sigma ) \sigma( M(\mathbf{x}) \bar{F}( x_1, \dots, x_m;t ) )\\
=\sum_{ \sigma \in \mathfrak{S}_m } \epsilon( \sigma ) \sigma( M( x ))
 -t\sum_{\sigma \in \mathfrak{S}_m} \epsilon(\sigma) \sigma( x_2^2 x_3^3 \dots x_m^m \bar{F}(0, x_2, \dots, x_m;t) ) \label{eq:orsum}\\
       -t\sum_{\sigma \in \mathfrak{S}_m}  \epsilon(\sigma)
\sigma \Bigl(\frac{ M(\mathbf{x}) \bar{F}(0, x_2, \dots, x_m;t)}{x_2+x_3+\dots + 1}
          \Bigr).
\end{multline}

%----------------------------------------------------------------------
\subsection{The constant term extraction operator $\mathcal{CT}$}
\label{sec:L}
%----------------------------------------------------------------------

Our goal is to obtain the series $\bar{F}(0, 0, \dots, 0;t)$. Remark,
any term in $\bar{F}(x_1, \dots, x_m;t)$ containing non-zero exponents
of $x_i$'s for $i \in [m]$ disappears when~$x_i$ is set to~$0$. The
exponents of each $(x_i + x_{i+1} + \dots + x_m+1)$ are all
non-negative, implying that to get a constant term, each factor in
parentheses must go to the constant, leaving only the variable $t$,
keeping track of the size of the partition. For the sake of brevity in presentation, we define a linear operator for constant term extraction, namely $[x_1^0 x_2^0 \dots x_m^0]\bar{F}$.
\begin{dfn}
\label{loL}
Let $\mathcal{CT}$ be the constant term extraction operator defined on Laurent series by the following action on monomials:
\[
\mathcal{CT}(x_1^{e_1} x_2^{e_2} \dots x_m^{e_m} t^k) =\begin{cases}
  0,&  \text{if $e_i \ne 0$ for some $i \in [m]$},\\
  t^k & \text{otherwise.}
  \end{cases}
\]
\end{dfn}
Before applying our constant term extraction operator $\mathcal{CT}$,  to the orbit sum, Equation~\eqref{eq:orsum}, we first divide Equation~\eqref{eq:orsum}
by $M(\mathbf{x})K(\mathbf{x};t)$:
\begin{multline}
\sum_{\sigma \in \mathfrak{S}_m} \frac{\epsilon(\sigma) \sigma(M(\mathbf{x}) \bar{F}(x_1, \dots, x_m;t))}{M(\mathbf{x})}\\
\shoveleft{=\frac{s}{1-ths}
\Bigl(\sum_{\sigma \in \mathfrak{S}_m} \frac{\epsilon(\sigma) \sigma(M(\mathbf{x}))}{M(\mathbf{x})}
                 } \\
- t\sum_{\sigma \in \mathfrak{S}_m} \frac{\epsilon(\sigma) \sigma(x_2^2x_3^3\dots x_m^m \bar{F}(0,x_2,\dots, x_m;t))}{M(\mathbf{x})} \\
- t\sum_{\sigma \in \mathfrak{S}_m}
\frac{ \epsilon(\sigma) \sigma(
                               \frac{ M(\mathbf{x}) \bar{F}(0, x_2,\dots, x_m;t) }
                                    { x_2+\dots+x_m+1}
                              )
     }{M(\mathbf{x})}
\Bigr).
\label{eq:morsum}
\end{multline}

On the LHS of Equation~\eqref{eq:morsum}, after $\mathcal{CT}$ is applied, only the term corresponding to $\sigma=id$ remains, yielding
\[
\mathcal{CT}(\bar{F}(x_1, \dots, x_m;t)) = \sum_{\pi \in \Pi^{(m)}}t^{|\pi|}
\]
because the other terms all contain a nonzero exponent for some $x_i$
where $i\in [m]$. When we extract the coefficient of $t^n$ from
$\mathcal{CT}(\bar{F}(\mathbf{x};t))$, we get precisely the number of
set partitions of size $n$ without an $(m+1)$-nesting.

The task is now clear: We need to extract the coefficient of
$x_1^0x_2^0\dots x_m^0$ from the RHS  of
Equation~\eqref{eq:morsum}. Fortunately, since $\mathcal{CT}$ is a
linear operator, we can examine the RHS of Equation~\eqref{eq:morsum}
term by term, namely, by considering the three surviving orbit sums
one at a time. We illustrate this process with an example using nonnesting set partitions.

%----------------------------------------------------------------------
\section{$2$-nonnesting set partitions}
%----------------------------------------------------------------------
The generating function derivation in this section is for pedagogical
purposes, to illustrate how to manipulate
Equation~\eqref{eq:morsum}. Indeed, there are easier ways to determine
the generating function for the Catalan numbers.

To enumerate $2$-nonnesting set partitions we set $m=1$ in the above
equations. In this case, $\bar{F}(0;t) = \sum_{\pi  \in
  \Pi^{(1)}} t^{|\pi|} $ which we rewrite as $\bar{F}(0;t) =\sum
F_n t^n$.  Since $m=1$, the associated symmetric group is
$\mathfrak{S}_1$ which only contains the identity permutation; thus
the functional equation is:
\begin{equation}
\bar{F}(x_1; t) = \frac{x_1 + 1}{ 1-t \left(\frac1{x_1} + 1\right)(x_1 + 1)}
                - \frac{t (x_1 + 1)} { 1-t \left(\frac1{x_1} + 1\right)(x_1 + 1)}\left(\frac1{x_1} + 1\right)\bar{ F}(0;t).
\label{eq:m1}
\end{equation}

Though this is an easy case, writing out the action of $\mathcal{CT}$ shows us how terms are collected and coefficients computed in a rather slow way. First expand Equation~\eqref{eq:m1} as power series to get
\begin{multline}
\bar{F}(x_1;t) =\sum_{n=0}^{\infty} (x_1+1)^{n+1} \left(\frac1{x_1}+1\right)^nt^n
     - \sum_{n=0}^{\infty}((x_1+1)t)^{n+1}\left(\frac1{x_1}+1\right)^{n+1}
       \times \sum_{n=0}^\infty F_n t^n
\end{multline}
Now apply $\mathcal{CT}$ to get
\begin{align*}
\sum_{n=0}^\infty F_n t^n =\mathcal{CT} \bar{F}(x_1;t)
&= \sum_n  \sum_{j = 0}^n \binom{n+1}{j+1} \binom{n}{j}t^n
 -\sum_n \sum_{j= 0}^n \binom{n+1}{j+1} ^2t^{n+1} \times
                            \sum_{n=0}^\infty F_n t^n
\\
&=\sum_n t^n\left(\sum_{j=0}^n \binom{n+1}{j+1} \binom{n}{j} - \sum_{n^* = 0}^{n-1}\Bigl ( \sum_{j=0}^{n^*+1} \binom{n^* + 1}{j}^2 \Bigr) F_{n-n^*-1}\right).
\end{align*}
We deduce a recurrence for $F_n$ after simplifying the binomial
summations:
\[
F_n= \frac{1}{2}\binom{2n+2}{n+1} -
\sum_{j=0}^{n-1} \binom{2j+2}{j+1}F_{n-1-j}.
\]
When all $F_k$'s are collected to the left, we get
\[
\sum_{k=0}^{n} \binom{2k}{k} F_{n-k} = \frac{1}{2}\binom{2n+2}{n+1}.
\]
Upon noticing that the left hand side is a convolution product, we define
\[
f(x) = \sum_{0}^{\infty} F_k x^k, \qquad \text{and} \qquad
g(x) = \sum_{0}^{\infty} \binom{2k}{k} x^k = \frac{1}{\sqrt{1-4x}}
\]
to obtain
\[
f(x) g(x) = \frac{1}{2x} (g(x) - 1), \qquad \text{or} \qquad 
f(x) = \frac{1}{2x}(1-\sqrt{1-4x}),
\]
the famous Catalan series as expected.
% P:= proc(n)
% if n<0 then 0
% elif n=0 then 1
% else
%   binomial(2*n+2, n+1)/2-add(P(n-1-k)*binomial(2*(k+1), k+1), k=0..n-1);
% fi
% end proc:
% seq(P(i), i=0..10);
%                1, 1, 2, 5, 14, 42, 132, 429, 1430, 4862, 16796

%----------------------------------------------------------------------
\section{$3$-nonnesting set partitions}
\label{sec:3non}
%----------------------------------------------------------------------
The first non-trivial case is $3$-nonnesting set partitions to study how
the orbit sum produces sums of products of multinomial
coefficients. Notice that the previous example with $m=2$, no explicit formula for $F_n$ was used; instead, it the $F_n$'s was defined in terms of all previous $F_j$ for all $j \le n$. However, the convolution product allowed a successful isolation of the generating function $f(x)$. For this reason, it is our opinion that the study of the structure of convolution-like product  for $m=3$ and beyond may shed light in the nature of the generating series. As in the previous example, through the investigating of  the action of $\mathcal{CT}$
on the RHS of Equation~\eqref{eq:morsum}, we show how the conditions
of summation indices turn out to reduce to a simple equation, thus
restricting the degree of freedom. This exercise, though tedious when
carried to the next case, $m = 3$, lends evidence to the conjecture by
Bousquet-M\'elou and Xin in \cite{b-mx} that the generating function
of the $4$-nonnesting case is not D-finite. Furthermore, we get
 enumerative formulas as functions of the label, and an understanding of the structure
of the generating functions.

%----------------------------------------------------------------------
\subsection{First term of RHS of Equation~\eqref{eq:morsum}}
%----------------------------------------------------------------------
The first extraction is resolved by a simple combinatorial
argument on the total way to combine the exponents to get a constant:
\begin{align*}
\mathcal{CT}&\Bigl( \frac{x_1 + x_2 + 1}
                      {1- t h s}    \sum_{\sigma \in \mathfrak{S}_2}
\epsilon(\sigma) \frac{\sigma(x_1 x_2^2)}
                     {x_1 x_2^2}
           \Bigr)\\
\qquad=&\mathcal{CT}\biggl( \Bigl( \sum_{n=0}^{\infty} \left(\frac1{x_1} + \frac1{x_2} + 1\right)^n \left( x_1 + x_2 + 1\right)^{n+1} t^n \Bigr)
                  (1 - \frac{x_1}{x_2})
            \biggr)\\
=&\sum_{n=0}^{\infty} \sum_{\substack{0 \le l_1, l_2, l_3 \\
                                    l_1 + l_2 + l_3 = n }   }
 \binom{n}
       {l_1, l_2, l_3} \binom{n+1}
                             {l_1, l_2, l_3 + 1} t^n \\
  &\qquad -  \sum_{n=0}^{\infty} \sum_{\substack{0 \le l_1, l_2, l_3 \\
                                        l_1 + l_2 + l_3 = n  }     }
 \binom{n}
       {l_1, l_2, l_3} \binom{n+1}
                             {l_1 - 1, l_2 + 1, l_3 + 1}  t^n\\
=&\sum_{n=0}^{\infty}t^n \sum_{\substack{0 \le l_1, l_2, l_3 \\
                                    l_1 + l_2 + l_3 = n }   }
 \binom{n}{l_1, l_2, l_3} \left(1-\frac{l_1}{l_2+1}\right).
\end{align*}

%----------------------------------------------------------------------
\subsection{Second term of RHS of Equation~\eqref{eq:morsum}}
%---------------------------------------------------------------------
The remaining terms are expressed in terms of
\[\bar{F}(0, x_2;t)=
\sum_{\pi \in \Pi^{(2)}} (x_2 + 1)^{a_2} t^{|\pi|}= \sum_{n=0}^\infty
\sum_{k=1}^{n+1} F_n(k) (1+x_2)^kt^n,\] where $F_n(k)$ is
the number of partitions of $[n]$ in $ \Pi^{(2)}$ such that
$a_2(\pi)=k$.  Remark, we have the relation $F_{n+1}=\sum_{k=1}^{n+1}
F_n(k) k$ by the comment that each partition $\pi$ has $a_2(\pi)$
children.
Under the action of $\mathfrak{S}_2$, the orbit sum has two terms,
one from the identity and one from interchanging $x_1$ and $x_2$:
\begin{multline}
\frac{ ( x_1 + x_2 + 1)  t}
                      {1- t h s}     \sum_{\sigma \in \mathfrak{S}_2}
\epsilon(\sigma) \frac{\sigma\left( x_2^2 \bar{F}(0, x_2;t)\right)}
                     {x_1 x_2^2} \\
\shoveleft{=\Bigl( \sum_{n=0}^{\infty}\left (\frac1{x_1} + \frac1{x_2} +1\right)^n
\left(   ( x_1 + x_2 + 1)  t  \right )^{n+1}
     \Bigr )}
    \times  \Bigl( \frac1{x_1} \bar{F}(0, x_2;t) - \frac{x_1}{x_2^2} \bar{F}(0, x_1;t)
     \Bigr).
\end{multline}
We use the linearity of the operator, and consider this expression in
two steps. First,
We expand this and  apply $\mathcal{CT}$ to the first term, using the definition of $\bar{F}$,  to get
\begin{multline}
\mathcal{CT} \Bigl(
\sum_{n=0}^{\infty}\left (\frac1{x_1} + \frac1{x_2} +1\right)^n\left(   ( x_1 + x_2 + 1)  t  \right )^{n+1}
 \frac1{x_1} \bar{F}(0, x_2;t) \Bigr )\\
=\mathcal{CT}\Bigl( \sum_{n=0}^{\infty}
 \sum_{\substack{0\le l_1, l_2, l_3,\\
                 l_1+l_2 + l_3 = n    }    }
 \sum_{\substack{0 \le j_2, j_3\\
       j_2+j_3 = n-l_1     } }
    \binom{n}{l_1, l_2, l_3} \binom{n+1}{l_1+1 , j_2, j_3}
    x_2^{j_2 - l_2}t^{n+1}  \\
\shoveright{\times
    \sum_{n = 0}^{\infty}
           \sum_{k = 1}^{n+1}  F_n(k)\sum_{i=0}^k\binom{k}{i}
                          x_2^{i} t^n
                 \Bigr)}\\
=\sum_{n=0}^\infty t^n \sum_{n^*\le n-1}
  \sum_{\substack{0\le l_1, l_2, l_3 \\
                 l_1+l_2 + l_3 =  n-n^*-1}}
 \sum_{\substack{0 \le j_2, j_3 \\
       j_2+j_3 =n-n^*-1-l_1  }}
     \binom{n-n^*-1}{l_1, l_2, l_3}
     \binom{n-n^*}{l_1+1, j_2, j_3}\\
\shoveright{\times\sum_{k\le n^*+1} F_{n^*}(k) \binom{k}{l_2-j_2}.}\\
\end{multline}
Remark, to extract the constant coefficient with respect to $x_2$, we impose
$j_2-l_2=-i$ on the inner most summation.

A similar expression is obtained from the second term:
\begin{multline}
\mathcal{CT} \Bigl(
\sum_{n=0}^{\infty}\left(\frac1{x_1} + \frac1{x_2} +1\right)^n
                   \left(   ( x_1 + x_2 + 1)  t  \right)^{n+1}
 \frac{x_1}{x_2} \bar{F}(0, x_1;t) \Bigr )\\
=\mathcal{CT}\Bigl(\sum_{n=0}^{\infty}
 \sum_{\substack{0\le l_1, l_2, l_3,\\
                 l_1+l_2 + l_3 = n    }    }
 \sum_{\substack{0 \le j_1, j_3 \le n,\\
       j_1+j_3 = n-1 -l_2    } }
    \binom{n}{l_1, l_2, l_3} \binom{n+1}{j_1, l_2+2, j_3}
    x_1^{j_1 - l_1+1}t^{n+1}      \\
 \shoveright{\times
    \sum_{n = 0}^{\infty}
    \sum_{k = 1}^{n+1}  F_n(k)\sum_{i=0}^k\binom{k}{i}
                          x_1^{i} t^n
                 \Bigr)}\\
=\sum_{n=0}^\infty t^n \sum_{n^*\le n-1}
  \hspace{-4mm}
  \sum_{\substack{0\le l_1, l_2, l_3 \\
                 l_1+l_2 + l_3 =  n-n^*-1}}
   \sum_{\substack{0 \le j_1, j_3 \\
       j_1+j_3 =n-n^*-2-l_2     } }\hspace{-4mm}
   \binom{n-n^*-1}{l_1, l_2, l_3}
   \binom{n-n^*}{j_1, l_2+2, j_3}\\
\shoveright{\times\sum_{k\le n^*+1} F_{n^*}(k)\binom{k}{l_1-j_1-1}.}\\
\end{multline}
where similar conditions as above also apply to surviving terms,  namely:
$-i = j_1 - l_1 + 1$.

%----------------------------------------------------------------------
\subsection{Third term of RHS of Equation~\eqref{eq:morsum}}
%----------------------------------------------------------------------
Finally, the action of $\mathfrak{S}_2$ on the third term of RHS of
Equation~\eqref{eq:morsum} yields two terms as in the previous case,
and the analysis is almost identical. The third term is:
\begin{align*}
&\frac{ ( x_1 + x_2 + 1)  t}
      {1- t h s}
 \sum_{\sigma \in \mathfrak{S}_2}
   \frac{\epsilon(\sigma)}{x_1 x_2^2}
   \sigma\left( \frac{x_1x_2^2 }
                      {x_2 + 1}\bar{F}(0, x_2;t)\right)\\
&=\left( \sum_{n=0}^{\infty}
            \left(\frac1{x_1} + \frac1{x_2} + 1\right)^n
            \left(   ( x_1 + x_2 + 1)  t   \right)^{n+1}
     \right)
\times \left(
             \frac{\bar{F}(0, x_2;t)}{x_2+1} - \frac{x_1}{x_2}
             \frac{ \bar{F}(0, x_1;t) }{ x_1 + 1}
     \right)
\end{align*}
We take a closer look at the second part:
\begin{equation*}
 \frac{\bar{F}(0, x_2;t)}{x_2+1} - \frac{x_1}{x_2}
             \frac{ \bar{F}(0, x_1;t) }{ x_1 + 1}
=\sum_n\sum_{k=1}^{n+1} F_n(k)t^n
\left(
(x_2 + 1)^{k-1} - \frac{x_1}{x_2}(x_1 + 1)^{k-1}
\right).
\end{equation*}

Applying $\mathcal{CT}$ to the entire expression yields
\begin{multline}
\sum_{n}t^n\sum_{n^*\le n-1}\sum_{k\le n^*+1} F_{n^*}(k)
 \hspace{-5mm}\sum_{\substack{0\le l_1, l_2, l_3 \\
                 l_1+l_2 + l_3 =  n-n^*-1}    } \binom{n-n^*-1}{l_1, l_2, l_3}\\
\left(
   \sum_{\substack{0 \le j_2, j_3 \\
       j_2+j_3 =n-n^*-l_1  } }
     \binom{n-n^*}{l_1, j_2, j_3}
     \binom{k-1}{l_2-j_2}
  -\sum_{\substack{0 \le j_1, j_3 \\
       j_1+j_3 =n-n^*-l_2-1     } }
     \binom{n-n^*}{j_1, l_2+1, j_3}
     \binom{k-1}{l_1-j_1-1}
\right).
\end{multline}

\subsection{A complete expression for $F_n$}
We can put the three components together into one expression for the
coefficient of $F_n$ in terms of $F_{n^*}(k)$, a function of the label where $n^*<n$:
\begin{multline}
F_n =\sum_{\substack{0 \le l_1, l_2, l_3 \\
                                  l_1 + l_2 + l_3 = n }   }
 \binom{n}
       {l_1, l_2, l_3}\left(1-\frac{l_1}{l_2+1}\right)\\
\shoveleft{- \sum_{n^*=0}^{n-1}\sum_{k\le n^*+1} F_{n^*}(k)
 \hspace{-5mm}\sum_{\substack{0\le l_1, l_2, l_3 \\
                 l_1+l_2 + l_3 =  n-n^*-1}    } \binom{n-n^*-1}{l_1,
               l_2, l_3}}\\
\Bigl( \sum_{\substack{0 \le j_2, j_3 \\
       j_2+j_3 =n-n^*-1-l_1  }}
   \binom{n-n^*}{l_1+1, j_2, j_3}
   \binom{k}{l_2-j_2}
- \sum_{\substack{0 \le j_1, j_3\\
       j_1+j_3 = n-l_2 -2     } }
  \binom{n-n^*}{j_1, l_2+2, j_3}
  \binom{k}{l_1-j_1-1}\\
+  \sum_{\substack{0 \le j_2, j_3 \\
       j_2+j_3 =n-n^*-l_1  } }
     \binom{n-n^*}{l_1, j_2, j_3}
     \binom{k-1}{l_2-j_2}
  -\sum_{\substack{0 \le j_1, j_3 \\
       j_1+j_3 =n-n^*-l_2-1     } }
     \binom{n-n^*}{j_1, l_2+1, j_3}
     \binom{k-1}{l_1-j_1-1}
\Bigr).
\end{multline}

Note how $F_n$ is expressed as a convolution-like sum involving all previous $F_{n^*}$ for $n^* < n$. In this form, the authors are unable to obtain a recurrence for the $F_n$'s.

%----------------------------------------------------------------------
\section{Complexity of $m \ge 3$}
\label{sec:complexity}
%----------------------------------------------------------------------
These examples give us a strong flavour of the general formula. The
first term is always the constant term of a rational function, and
hence is always D-finite. There are some sources for added complexity
when $m$ is greater than $2$. First, the number of terms in the orbit
sum grows like $m!$, although one can expect them to  be of a similar
form, as was the case in the $m=2$ case. The number of parameters that
play a role in the formulas is perhaps the key difference. In the
$m=1$ case, we eliminate dependence on the parameter, and determine
direct recurrences. In the $m=2$ case, we use the parameter $a_2$, but
we also have the additional property that the sum of this parameter
over all 3-nonnesting partitions of size $n$ is the number of
3-nonnesting partitions of size $n+1$. Thus, there is an additional
relation. 

We avoid the full treatment of the $m=3$ case, and rather go directly
to the typical effect of $\mathcal{CT}$ on the orbit sum to illustrate how this
expression is increasingly complex. The functional equation in this
case is
\begin{multline}
\frac{x_1 + x_2 + x_3+ 1}
                      {1- t h s}
\sum_{\sigma \in \mathfrak{S}_3}
     \epsilon(\sigma) \frac{\sigma(x_1 x_2^2 x_3^3)}
                           {x_1 x_2^2 x_3^3}
\\
- \frac{x_1 + x_2 + x_3+ 1}
                      {1- t h s}    t
\sum_{\sigma \in \mathfrak{S}_3}
          \frac{\epsilon(\sigma)}{x_1 x_2^2x_3^3}\sigma\left( x_2^2 x_3^3\bar{F}(0,x_2, x_3;t)\right)
\\
 \shoveright{
  - \frac{x_1 + x_2 + x_3+ 1} {1- t h s}  t
   \sum_{\sigma \in \mathfrak{S}_3}
          \frac{\epsilon(\sigma)}{x_1 x_2^2x_3^3}
           \sigma\left(\frac{x_2^2 x_3^3\bar{F}(0,x_2, x_3;t)}
            {x_2+x_3+1}\right).
}
\\
\end{multline}
Applying $\mathcal{CT}$ to the first term yields a sum of six
multinomial summations that simplifies to an expression of the form
\[
\sum_{k=0}^{\infty} \sum_{\substack{0 \le l_1, l_2, l_3, l_4 \le k, \\
                                    l_1 + l_2 + l_3+ l_4 = k }   }
 \binom{k}
       {l_1, l_2, l_3, l_4} R(l_1, l_2, l_3, l_4),
\]
where $R$ is a simple rational function, and, as we noted earlier this
expression is D-finite, since it is a coefficient extraction of a
rational function.

The second and third terms involve
\[\bar{F}(0,x_2, x_3;t)=\sum_{\pi \in \Pi^{(3)}}
(x_2 + x_3+ 1)^{a_2(\pi)} (x_3+1)^{a_3(\pi) - a_2(\pi)} t^{|\pi|}.\]
The result is that when~$\mathcal{L}$ is applied to the second and third terms, we get
the nested summations involving complex expressions of $a_2(\pi)$ and
$a_3(\pi)$. The following is a typical sample
expression for the coefficient of $t^n$:
\begin{multline}
\sum_{k \le n-1} \sum_{\substack{0 \le l_1, l_2, l_3, l_4 \le k, \\
                                                       l_1 + l_2 + l_3 + l_4 = k }   }
 \binom{k}
       {l_1, l_2, l_3, l_4}
            \sum_{\substack{0 \le j_2, j_3, j_4 \le k\\
                                                 l_1+j_2 +j_3+j_4 = k}   }
 \binom{k+1}
              {l_1+1, j_2,j_3, j_4 }
\\
\left(\sum_{\substack{ \pi \in \Pi^{(3)} \\
                                      |\pi| ={ n - k - 1}}}
        \sum_p
                            \binom{a_2(\pi)} {l_2-j_2, p,  a_2(\pi)-p -l_2+j_2}
                 \times \binom{a_3(\pi) - a_2(\pi)}{l_3-p-j_3}
\right).
\end{multline}

%----------------------------------------------------------------------
\section{Computing series expansions}
\label{sec:series}
%----------------------------------------------------------------------
All three Equations~\eqref{eq:u}, \eqref{veq}, and  \eqref{eq:kerx} are used
to generate initial terms in the series. To improve convergence,
we slightly modify the $x$ equation:
\begin{equation}
\begin{split}
\bar{F}(x_1, x_2, \dots, x_m;t) =
s + s t h &\bar{F}(x_1, x_2, \dots, x_m;t) \\
-st\Bigl(&\frac{s}{s-x_1} \frac{\bar{F}(0, x_2, x_3, \dots x_m);t)}{x_1}  \\
&+  \sum_{j=2}^m \frac{\bar{F}(x_1, \dots, x_{j-2}, x_{j-1} + x_j, 0, x_{j+1}, \dots, x_m; t)}{x_j}\Bigr),
\label{eq:kerxis}
\end{split}
\end{equation}

Notice that in Equation~\eqref{eq:kerxis}, if one has a series
expansion of $\bar{F}(\mathbf{x};t)$ correct up to $t^k$, then
substituting this series into RHS of Equation~\eqref{eq:kerxis} yields
the series expansion of $\bar{F}$ correct to $t^{k+1}$ because the RHS
of Equation~\eqref{eq:kerxis} contains a term free of $t$,
otherwise, the degree of $t$ is increased by $1$.  We have thus iterated Equation~\eqref{eq:kerxis}
 to get enumerative data for up to $m=9$.
 
   For $3$-nonnesting set partitions, an average laptop running Maple15 can produce $70$ terms in a reasonable time (less than $24$ hours).
For $m=4$,  only $38$ terms; $m=5$, $27$ terms; $m=6$, $20$ terms; $m=7$, $16$ terms, $m=8$, $12$ terms; and finally $m=9$, $12$ terms. The limitation seems memory space due to the growing complication in the functional equation when $m$ gets larger.
\begin{table}
\centering\tiny
\begin{tabular}{ll|llllllllllllllll}
\multicolumn{17}{c}{$n$}\\
$m$& OEIS \#&
1&2&3&4&5&6&7&8&9&10&11&12&13&14&15  
\\\hline
$1$&A000108& 1& 2& 5& 14& 42& 132& 429& 1430& 4862& 16796& 58786& 208012& 742900& 2674440& 9694845
\\
$2$&A108304& 1& 2& 5& 15& 52& 202& 859& 3930& 19095& 97566& 520257& 2877834& 16434105& 96505490& 580864901
\\
$3$&A108305& 1& 2& 5& 15& 52& 203& 877& 4139& 21119& 115495& 671969& 4132936& 26723063& 180775027& 1274056792
\\
$4$&A192126 &1& 2& 5& 15& 52& 203& 877& 4140& 21147& 115974& 678530& 4212654& 27627153& 190624976& 1378972826
\\
$5$&A192127& 1& 2& 5& 15& 52& 203& 877& 4140& 21147& 115975& 678570& 4213596& 27644383& 190897649& 1382919174
\\
$6$&A192128 & 1& 2& 5& 15& 52& 203& 877& 4140& 21147& 115975& 678570& 4213597& 27644437& 190899321& 1382958475
\\[1mm] \hline

\end{tabular}

\bigskip

\caption{Numbers of set partitions of $n$ avoiding an
  $m+1$-nesting. The OEIS numbers refer to entries in the Online
  Encyclopedia of Integer Sequences~\cite{oeis}}
\label{table:1}
\end{table}

%----------------------------------------------------------------------
\section{Conclusion}
%----------------------------------------------------------------------

Without passing through vacillating lattice walks or  tableaux, the generating tree approach permits a direct translation to a
functional equation involving an arbitrary number of catalytic
variables satisfied by set partitions avoiding $m+1$-nestings for any
$m$. Constant term coefficient extraction analysis gives us insight into why the
number of $3$-nonnesting set partitions should be more easily
controlled than those of higher non-nesting set
partitions. The authors are aware of the techniques developed for constant term extraction and are investigating how such techniques can give insight to the analysis of $m$-nonnesting numbers of set partitions. Though explicit generating
trees are given, formulas thus generated still depend on labels of set partitions. Perhaps further study into the nature of generating trees which
give rise to D-finite series, along the lines of the study
in~\cite{bb-mdfgg-b} will help us understand the differences. 

A second way that might yield a proof of non-D-finiteness would be to
use our expressions to determine bounds on the order and the coefficient degrees of the minimal differential
equation satisfied by the generating function. Though a tantalizingly simple idea, the limitation seems still the lack of data when larger $m$'s give so few values relative to the number one would need to test non-D-finiteness. Nevertheless, this would guide
searches and a fruitless search would then be a definitive result.

Finally, our generating tree approach is limited only to the non-enhanced
case. For a more general treatment of the subject involving enhanced
set partitions and permutations, both enhanced and non-enhanced, we
refer the reader to \cite{bemy} by Burrill, Elizalde, Mishna, and Yen.

%--------------------------------------------------------------------------
\section{Acknowledgements}
We are grateful to an anonymous referee for many constructive suggestions and  to Mireille Bousquet-M\'elou for her suggestions, 
Mogens Lemvig Hansen for his tireless generation of numbers with
Maple. The first author is partially supported by an Natural Sciences and
Engineering Research Council of Canada Discovery Grant.

%----------  Bibliography --------------
%\bibliographystyle{amsplain}
%\bibliography{setpartition}
%%
\providecommand{\bysame}{\leavevmode\hbox to3em{\hrulefill}\thinspace}
\providecommand{\MR}{\relax\ifhmode\unskip\space\fi MR }
% \MRhref is called by the amsart/book/proc definition of \MR.
\providecommand{\MRhref}[2]{%
  \href{http://www.ams.org/mathscinet-getitem?mr=#1}{#2}
}
\providecommand{\href}[2]{#2}

\end{document}